\begin{document}

\newtheorem{theorem}[subsection]{Theorem}
\newtheorem{proposition}[subsection]{Proposition}
\newtheorem{lemma}[subsection]{Lemma}
\newtheorem{corollary}[subsection]{Corollary}
\newtheorem{conjecture}[subsection]{Conjecture}
\newtheorem{prop}[subsection]{Proposition}
\numberwithin{equation}{section}
\newcommand{\mr}{\ensuremath{\mathbb R}}
\newcommand{\mc}{\ensuremath{\mathbb C}}
\newcommand{\dif}{\mathrm{d}}
\newcommand{\intz}{\mathbb{Z}}
\newcommand{\ratq}{\mathbb{Q}}
\newcommand{\natn}{\mathbb{N}}
\newcommand{\comc}{\mathbb{C}}
\newcommand{\rear}{\mathbb{R}}
\newcommand{\prip}{\mathbb{P}}
\newcommand{\uph}{\mathbb{H}}
\newcommand{\fief}{\mathbb{F}}
\newcommand{\majorarc}{\mathfrak{M}}
\newcommand{\minorarc}{\mathfrak{m}}
\newcommand{\sings}{\mathfrak{S}}
\newcommand{\fA}{\ensuremath{\mathfrak A}}
\newcommand{\mn}{\ensuremath{\mathbb N}}
\newcommand{\mq}{\ensuremath{\mathbb Q}}
\newcommand{\half}{\tfrac{1}{2}}
\newcommand{\f}{f\times \chi}
\newcommand{\summ}{\mathop{{\sum}^{\star}}}
\newcommand{\chiq}{\chi \bmod q}
\newcommand{\chidb}{\chi \bmod db}
\newcommand{\chid}{\chi \bmod d}
\newcommand{\sym}{\text{sym}^2}
\newcommand{\hhalf}{\tfrac{1}{2}}
\newcommand{\sumstar}{\sideset{}{^*}\sum}
\newcommand{\sumprime}{\sideset{}{'}\sum}
\newcommand{\sumprimeprime}{\sideset{}{''}\sum}
\newcommand{\shortmod}{\ensuremath{\negthickspace \negthickspace \negthickspace \pmod}}
\newcommand{\V}{V\left(\frac{nm}{q^2}\right)}
\newcommand{\sumi}{\mathop{{\sum}^{\dagger}}}
\newcommand{\mz}{\ensuremath{\mathbb Z}}
\newcommand{\leg}[2]{\left(\frac{#1}{#2}\right)}
\newcommand{\muK}{\mu_{\omega}}

\title[First Moments of Some {H}ecke {$L$}-functions of Prime Modulus]{First Moments of Some {H}ecke {$L$}-functions of Prime Moduli}

\date{\today}
\author{Peng Gao and Liangyi Zhao}

\begin{abstract}
 We study the first moments of central values of Hecke $L$-functions associated with quadratic, cubic and quartic symbols to prime moduli. This also enables us
to obtain results on first moments of central values of certain families of cubic and quartic Dirichlet $L$-functions of prime moduli.

\end{abstract}

\maketitle

\noindent {\bf Mathematics Subject Classification (2010)}: 11M06, 11M41  \newline

\noindent {\bf Keywords}: Hecke characters, Hecke $L$-functions, mean values

\section{Introduction}
\label{sec 1}

Moments of $L$-functions can be used to address the non-vanishing of the central values of $L$-functions,
an issue that attracts much attention in the literature due to significant arithmetic information carried by these central values.  For example, the central values of quadratic Dirichlet $L$-functions are related to class numbers of imaginary quadratic fields (\cite[p. 514]{iwakow}), so that a considerable amount of investigations have been done on moments of this family of $L$-functions. \newline

 In \cite{Jutila}, M. Jutila initiated the study on the first and second moments central values of the family of quadratic $L$-functions and
used his result to show that there are infinitely many $L$-functions in this family with non-vanishing central values.
The error terms in Jutila's result was subsequently improved in \cites{DoHo, MPY, ViTa} for the first moment and \cites{sound1, Sono} for the second moment.
A valid asymptotic formula for the third moment of central values of the family of quadratic Dirichlet $L$-functions was first established by K. Soundararajan
in \cite{sound1} and the error term for a smoothed version was later improved by M. P. Young in \cite{Young2}. Recently, Q. Shen obtained
a valid asymptotic formula in \cite{Shen} for the fourth moment of central values of the same family of $L$-functions under the Generalized Riemann Hypothesis (GRH). \newline

  Due to its relation to the Birch and Swinnerton-Dyer conjecture, the family of quadratic twists of a modular form is another important family.
The mean value of that family has been studied in \cites{BFH, Iwan1, Munshi, MM, Petrow1}. Assuming GRH, the second moment of the family was computed by  K. Soundararajan and M. P. Young in \cite{S&Y}. \newline

In addition to the above mentioned families,  much work has been done on moments of Hecke $L$-functions associated with various families of characters of a
fixed order. In \cite{Luo}, W. Luo studied the first two moments of cubic Hecke $L$-functions in $\mq(\omega)$, where $\omega=\frac {-1+\sqrt{3}i}{2}$ is a
primitive cubic root of unity.  The analogue case for cubic Dirichlet $L$-functions was obtained by S. Baier and M. P. Young in \cite{B&Y}. We refer the reader to
the articles \cites{FaHL, GHP, FHL, Diac, G&Zhao1} for related results in this area. \newline

  All the above mentioned results have the common feature that the set of conductors of each family of $L$-functions considered has a positive density in the ring of integers in their respective number fields. Keeping in mind that the set of rational primes is a sparse set (a subset of density zero in the set of rational integers), it is then interesting to
consider the case when the set of conductors of a family forms a sparse set.
In fact, a weighted first moment of central values of the family of quadratic Dirichlet $L$-functions of prime conductors has already been studied by Jutila
in \cite{Jutila}, the same paper in which he obtained the first two moments of the same family of $L$-functions when the set of conductors of the family
has a positive density. A weighted second moment of
central values of the family of quadratic Dirichlet $L$-functions of prime conductors was recently computed by S. Baluyot and K. Pratt in \cite{BP} assuming GRH. \newline

  Motivated by the above results of Jutila, we study, in this paper, the first moments of a few families of $L$-functions with prime moduli assuming GRH. Our first result is an analogue of Jutila's result on quadratic Hecke $L$-functions.
Let $K$ be a number field of class number one, $\mathcal{O}_K$ be its ring of integers and $U_K$ the group of units in $\mathcal{O}_K$.
We shall denote $\varpi$ for a prime number in $\mathcal{O}_K$, by which we mean that the ideal $(\varpi)$ generated by $\varpi$ is a prime ideal.
We write $N(k), \mathrm{Tr}(k)$ for the norm and trace of any $k \in K$.
We further denote $\chi$ for a Hecke character of $K$ and we say that $\chi$ is of trivial infinite type if its component at infinite places of $K$ is trivial.
We write $L(s,\chi)$ for the $L$-function associated to $\chi$ and $\zeta_{K}(s)$ for the Dedekind zeta function of $K$ and $\zeta(s)$
for the Riemann zeta function.  We also use $\Lambda(n)$ for the von Mangoldt function on $\mathcal{O}_K$ given by
\begin{align*}
    \Lambda(n)=\begin{cases}
   \log N(\varpi) \qquad & n=\varpi^k, \text{$\varpi$ prime}, k \geq 1, \\
     0 \qquad & \text{otherwise}.
    \end{cases}
\end{align*}

   In the remainder of this section, we let $K=\mq(\omega)$ or $\mq(i)$. Let $\chi_{\varpi}$ be a quadratic Hecke symbol defined in Section \ref{sec2.4} and it is shown there
that $\chi_{\varpi}$ is a Hecke character of trivial infinite type when $\varpi \equiv 1 \pmod {36}, \varpi \in \mz[\omega]$ or
$\varpi \equiv 1 \pmod {16}, \varpi \in \mz[i]$. For the corresponding families of Hecke $L$-functions, we have the following
\begin{theorem}
\label{firstmoment}
  Suppose that GRH is true. Let $\Phi$ be a compactly supported smooth Schwartz class function. For $y \rightarrow \infty$ and any $\varepsilon > 0$, we have
\begin{align} \label{1stmom}
    \sumstar_{\varpi}L \left( \frac{1}{2},
   \chi_{\varpi} \right) \Lambda(\varpi) \Phi\left( \frac{N(\varpi)}{y} \right) =A_K \hat{\Phi}(0) y\log y +B_K \hat{\Phi}(0)y+O\left( y^{3/4+\varepsilon} \right),
\end{align}
   where $K = \ratq(i)$ or $\ratq(\omega)$, $\sum^{*}$ indicates that the sum runs over prime elements of $\mz[\omega]$ congruent to $1 \pmod{36}$
if $K = \ratq(\omega)$,  or prime elements of $\mz[i]$ congruent to $1 \pmod {16}$ if $K = \ratq(i)$, $\chi_{\varpi}$ is the corresponding quadratic symbol, $B_K$ is a constant depending on $K$ and $\Phi$, and
\begin{align} \label{1.4}
   A_{\ratq(\omega)}= \frac {(1+\sqrt{3})\pi}{1296},
\quad A_{\ratq(i)}= \frac {(2+\sqrt{2})\pi}{512}, \quad \hat{\Phi}(0) = \int\limits_1^2 \Phi(x) \dif x.
\end{align}
\end{theorem}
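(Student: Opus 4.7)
The plan is to combine an approximate functional equation, quadratic reciprocity in $K$, the prime ideal theorem in ray classes, and GRH bounds for character sums over primes. Concretely, I would express $L(1/2,\chi_\varpi)$ as a smoothed sum over ideals $(n)$, swap summations, flip $\chi_\varpi(n)\to\chi_n(\varpi)$ by reciprocity, and then extract the main terms from $n$ a perfect square while bounding the off-diagonal via GRH.

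Start from an approximate functional equation of the form
\begin{align*}
L\bigl(\tfrac12,\chi_\varpi\bigr)=2\sum_{n\in\mathcal O_K/U_K}\frac{\chi_\varpi(n)}{\sqrt{N(n)}}\,W\!\left(\frac{N(n)}{\sqrt{N(\varpi)}}\right),
\end{align*}
for a fixed smooth weight $W$ of rapid decay, valid by the self-duality of $\chi_\varpi$ together with class number one. Substituting into \eqref{1stmom} and interchanging summations reduces the task to analyzing, for each $n$,
\begin{align*}
S(n,y)=\sumstar_{\varpi}\chi_\varpi(n)\,\Lambda(\varpi)\,\Phi\!\left(\frac{N(\varpi)}{y}\right).
\end{align*}
The congruence conditions $\varpi\equiv 1\pmod{36}$ or $\varpi\equiv 1\pmod{16}$ are tailored so that quadratic reciprocity and its supplementary laws yield $\chi_\varpi(n)=\chi_n(\varpi)$ with no residual genus factor, at least for $n$ coprime to $6$ or $2$ respectively (the remaining range is absorbed into the error).

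The diagonal terms $n=m^2$ produce the main contribution: $\chi_{m^2}(\varpi)=1$ for $\varpi\nmid m$, so $S(m^2,y)$ equals, up to a negligible error, a Chebyshev-weighted count of primes in the ray class $1\pmod{36}$ (or $16$) with $N(\varpi)\asymp y$. The prime ideal theorem in that ray class gives $S(m^2,y)=\kappa_K\hat\Phi(0)y+O(y\log^{-A}y)$, where $\kappa_K$ is the reciprocal of the size of the corresponding ray class group. Summing the weights $N(m)^{-1}W(N(m)^2/\sqrt{y})$ over $m$ and evaluating by Mellin inversion against $\zeta_K(2s)$ (whose pole at $s=1/2$ is the source of the $\log y$) yields $A_K\hat\Phi(0)y\log y+B_K\hat\Phi(0)y$. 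The explicit values in \eqref{1.4} should follow from the product of $\kappa_K$, $\mathrm{Res}_{s=1}\zeta_K(s)$, and the Euler factors at $2$ and $3$; $B_K$ collects the lower-order constants arising from the prime ideal theorem and the Mellin calculation.

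For $n$ not a perfect square (in $\mathcal O_K/U_K$), $\chi_n$ is a nontrivial Hecke character, and GRH for $L(s,\chi_n)$, via an explicit-formula or shifted-Perron argument, gives
\begin{align*}
S(n,y)\ll y^{1/2}\bigl(N(n)\,y\bigr)^{\varepsilon}
\end{align*}
uniformly in $n$. The truncation in $W$ restricts the outer sum to $N(n)\ll y^{1/2+\varepsilon}$, so the total off-diagonal contribution is at most
\begin{align*}
y^{1/2+\varepsilon}\sum_{N(n)\ll y^{1/2}}\frac{1}{\sqrt{N(n)}}\ll y^{1/2+\varepsilon}\cdot y^{1/4}=y^{3/4+\varepsilon},
\end{align*}
which matches the stated error. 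The principal obstacle will be obtaining this GRH bound with sharp uniformity in the analytic conductor of $\chi_n$ (essentially the squarefree kernel of $n$ together with the tame ray class modulus) and cleanly handling the contribution of $n$ sharing factors with $6$ or $2$; a secondary challenge is the constant bookkeeping that must produce the closed forms in \eqref{1.4} out of the interplay between $\mathrm{Res}_{s=1}\zeta_K(s)$, the ray class group size, and the local Euler factors at the small primes.
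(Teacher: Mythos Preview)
Your proposal is correct and follows essentially the same route as the paper: approximate functional equation, reciprocity to flip $\chi_\varpi(n)\to\chi^{(n)}(\varpi)$, main term from $n$ a square evaluated against $\zeta_K$, and off-diagonal bounded by GRH to give $y^{3/4+\varepsilon}$. The only cosmetic difference is that the paper packages the diagonal/off-diagonal split as a single contour shift of $-L'/L(s,\psi\chi^{(a)})$ (the pole at $s=1$ appearing exactly when $a$ is a square), and it absorbs the $\varpi$-dependence of the weight $V(N(n)/\sqrt{N(\varpi)})$ into a combined Mellin transform $\tilde f(s)$ rather than freezing $N(\varpi)\approx y$ as you implicitly do.
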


  The proof of Theorem \ref{firstmoment} is given in Section \ref{section Thm1}. As we are summing over primes, we cannot apply the Poisson summation (due to K. Soundararajan in \cite{sound1}) to deal with these sums. Thus our treatment here is similar to that of Jutila in \cite{Jutila},
except that we use the assumption on GRH to get better error terms. \newline

  Let $\chi_{j, \varpi}, j=3$, $4$ be a cubic or quartic Hecke symbol defined in Section \ref{sec2.4}.  It is shown there
that $\chi_{3, \varpi}$ is a Hecke character of trivial infinite type when $\varpi \equiv 1 \pmod {9}, \varpi \in \mz[\omega]$ and
$\chi_{4, \varpi}$ is a Hecke character of trivial infinite type when $\varpi \equiv 1 \pmod {16}, \varpi \in \mz[i]$.  Our next result deals with the associated cubic or quartic Hecke $L$-functions.
\begin{theorem}
\label{secmom}
 Suppose that GRH is true. Let $\Phi$ be a compactly supported smooth Schwartz class function. For $y \rightarrow \infty$ and any $\varepsilon > 0$, we have
\begin{align*}
    \sumstar_{\varpi}L \left( \frac{1}{2},
   \chi_{j,\varpi} \right) \Lambda(\varpi) \Phi\left( \frac{N(\varpi)}{y} \right) =C_K \hat{\Phi}(0)y+O\left( y^{39/40+\varepsilon} \right),
\end{align*}
   where $K = \ratq(i)$ or $\ratq(\omega)$,  $\hat{\Phi}(0)$ is given in \eqref{1.4},  $\sum^{*}$ indicates that the sum runs over prime elements of $\mz[\omega]$ congruent to $1 \pmod{9}$
if $K = \ratq(\omega)$,  or prime elements of $\mz[i]$ congruent to $1 \pmod {16}$ if $K = \ratq(i)$, $\chi_{j, \varpi}$ is the corresponding cubic symbol or quartic symbol with $j=3$ when $K=\mq(\omega)$
and $j=4$ when $K=\mq(i)$, and
\begin{align} \label{C}
   C_{\ratq(\omega)}= \frac {3\sqrt{3}-1}{27(\sqrt{3}-1)}  \zeta_{\mq(\omega)} \left( \frac 32 \right),
\quad C_{\ratq(i)}= \frac {3(2+\sqrt{2})}{128}\zeta_{\mq(i)}(4).
\end{align}
\end{theorem}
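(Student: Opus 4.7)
The plan is to proceed along the same lines as the proof of Theorem \ref{firstmoment}, with the modifications dictated by the fact that $\chi_{j,\varpi}$ is no longer self-dual for $j \in \{3,4\}$. I would begin by applying an approximate functional equation (AFE) to $L(\tfrac12, \chi_{j,\varpi})$, which expresses the central value as two smoothed Dirichlet polynomials, each of length $\asymp N(\varpi)^{1/2} \asymp y^{1/2}$; the second polynomial is weighted by the root number $\epsilon(\chi_{j,\varpi})$ and involves the dual character $\bar\chi_{j,\varpi}$. Substituting this into the left-hand side of the theorem and swapping the order of summation, the problem reduces to analyzing, for each $n \in \mathcal O_K$, the prime sum
\begin{equation*}
T(n) \;=\; \sum_\varpi \chi_{j,\varpi}(n)\,\Lambda(\varpi)\,\Phi\!\left(\frac{N(\varpi)}{y}\right),
\end{equation*}
together with its dual analogue, weighted by $1/\sqrt{N(n)}$ and by the AFE cutoff.

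Next, split the outer $n$-sum according to whether $n$ is a perfect $j$-th power in $\mathcal O_K$ modulo units. If $n = \nu m^j$ with $\nu \in U_K$ and $\gcd(m,\varpi) = 1$, then $\chi_{j,\varpi}(n) = 1$, so $T(n)$ collapses to the restricted prime sum $\sum_\varpi \Lambda(\varpi)\Phi(N(\varpi)/y)$, which by the prime ideal theorem in arithmetic progressions is asymptotic to $y\hat\Phi(0)$ times the natural density of the relevant congruence class. The outer sum over $m$ of $N(m)^{-j/2}$ converges absolutely (since $j/2 > 1$ for $j \ge 3$); combined with the analogous contribution of the dual AFE term, the root-number factors, and the unit corrections, this assembles into the explicit leading constant $C_K$ of \eqref{C}. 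The absence of a $\log y$ factor here, in contrast to Theorem \ref{firstmoment}, is due precisely to this absolute convergence.

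For $n$ that are not $j$-th powers, I would appeal to the cubic (respectively quartic) reciprocity law to convert $\chi_{j,\varpi}(n)$ into $\chi_{j,n}(\varpi)$; the hypotheses $\varpi \equiv 1 \pmod 9$ in $\mathbb Z[\omega]$ and $\varpi \equiv 1 \pmod{16}$ in $\mathbb Z[i]$ are precisely what makes this exchange clean, up to an easily tracked phase. The inner sum then becomes a sum of a non-principal Hecke character over primes, which GRH for $L(s, \chi_{j,n})$ bounds by $O\bigl(y^{1/2}(yN(n))^\varepsilon\bigr)$. Summing over $n$ within the AFE cutoff (and controlling the tail via the rapid decay of the AFE weight) then contributes to the error term. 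In the quartic case, the subclass $n = m^2$ deserves separate attention, since there $\chi_{4,\varpi}(m^2) = \chi_{2,\varpi}(m)$ becomes a quadratic character, to which quadratic reciprocity and GRH for $L(s,\chi_{2,m})$ can be applied in the same spirit.

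The main obstacles will be: evaluating the leading constant $C_K$ precisely, which requires careful bookkeeping of the Gauss-sum normalization of $\epsilon(\chi_{j,\varpi})$, the dual AFE contribution, and the unit and reciprocity factors; treating the ramified primes (at $(1-\omega)$ in $\mathbb Z[\omega]$ and at $(1+i)$ in $\mathbb Z[i]$) in the reciprocity step; and balancing the several sources of error (direct and dual, plus the quartic square subcase) so as to extract the final exponent $39/40$, which presumably emerges as the output of an optimization across these contributions after combining the GRH bound on $T(n)$ with the convexity and decay available from the AFE.
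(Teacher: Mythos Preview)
Your treatment of the first AFE sum is essentially correct and matches the paper: reciprocity converts $\chi_{j,\varpi}(n)$ into a character of $\varpi$ modulo (a multiple of) $n$, GRH then bounds the non-principal prime sums by $O(y^{1/2+\varepsilon})$, and the main term comes solely from the $j$-th-power diagonal. The paper executes this via Mellin inversion and a contour shift rather than an explicit split, but the content is the same.

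The genuine gap is in the dual AFE sum. That sum carries the root-number weight $W(\chi_{j,\varpi})/N(\varpi)^{1/2} = g_j(\varpi)/N(\varpi)^{1/2}$, so after swapping the order of summation the inner $\varpi$-sum is
\[
\sum_{\varpi} \frac{g_j(\varpi)}{N(\varpi)^{1/2}}\,\overline{\chi}_{j,\varpi}(n)\,\Lambda(\varpi)\,\Phi\!\left(\frac{N(\varpi)}{y}\right),
\]
and even after reciprocity this is \emph{not} a Hecke-character sum over primes: the normalized Gauss sum $g_j(\varpi)/N(\varpi)^{1/2}$ has modulus $1$ but genuinely oscillating phase, so GRH for $L(s,\chi_{j,n})$ says nothing about it. In particular, on the $j$-th-power diagonal this inner sum does not produce a term of size $y$; contrary to your plan, the dual sum contributes nothing to $C_K$ and is entirely an error term.

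The missing ingredient is Patterson's estimate for sums of cubic/quartic Gauss sums over primes (Lemma~\ref{lemg3} in the paper), which bounds exactly the sums displayed above. This is also what drives the exponent $39/40$: the paper uses an \emph{unbalanced} AFE with a free length parameter $x$, so that the first sum has effective length $\asymp x$ (off-diagonal error $y^{1/2+\varepsilon}x^{1/2}$ via GRH) while the dual sum has effective length $\asymp y/x$ (error controlled by Lemma~\ref{lemg3}), and then optimizes at $x=y^{19/20}$. With the balanced AFE you propose and without Patterson's input, there is no mechanism to make the dual-sum contribution small.
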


   The proof of Theorem \ref{secmom} is given in Section \ref{section Thm2} and our approach is analogous to that used by W. Luo in \cite{Luo}.
In particular, we make crucial use (see Lemma \ref{lemg3} below) of
a result of S. J. Patterson in \cite{P} on estimations of certain Gauss sums over primes to control the error term. \newline

   Let $\chi$ be a Dirichlet character modulo $q$ and $\chi_0$ the principal character modulo $q$ and we shall
say that the order of $\chi$ is $j\in \natn$ if $\chi^j=\chi_0$ but $\chi^i \neq \chi_0$ for all $1 \leq i<j$. We denote the order of $\chi$ by $\text{ord}(\chi)$.
By Lemma \ref{lemma:quarticclass} below, we see that each cubic or quartic symbol $\chi_{j, \varpi}$, $j=3$, $4$ gives rise to a corresponding Dirichlet character of
order $3$ or $4$ modulo $N(\varpi)$. One can consider the first moments of these $L$-functions and our result is
\begin{theorem}
\label{thirdmom}
 Suppose that GRH is true. Let $j=3,4$ and $\Phi$ be a compactly supported smooth Schwartz class function. For $Q \rightarrow \infty$ and any $\varepsilon > 0$, we have
\begin{align*}
     \sumstar_{\substack{p} }\;
\sum_{\substack{\chi \bmod{p} \\ \text{ord}(\chi)=j }} L \left( \frac 1{2}, \chi \right) \Lambda(p) \Phi \leg{p}{Q} =&
D_j \hat{\Phi}(0)Q+O\left( Q^{39/40+\varepsilon} \right),
\end{align*}
   where $\hat{\Phi}(0)$ is given in \eqref{1.4}, $\sum^{*}$ indicates that the sum runs over prime numbers $p$ of $\mz$ such that
$p=N(\varpi)$ with $\varpi \equiv 1 \bmod {9}, \varpi \in \mz[\omega]$ when $j=3$
or $\varpi \equiv 1 \bmod {16}, \varpi \in \mz[i]$ when $j=4$, and
\begin{align} \label{D}
   D_{3}= \frac {3\sqrt{3}-1}{27(\sqrt{3}-1)}\zeta \left( \frac 32 \right),
\quad D_{4}= \frac {3(2+\sqrt{2})}{128}\zeta(2).
\end{align}
\end{theorem}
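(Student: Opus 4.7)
The plan is to adapt the argument of Theorem \ref{secmom} from the Hecke setting to the Dirichlet setting, with Lemma \ref{lemma:quarticclass} supplying the bridge. For each rational prime $p$ in the outer sum, there are exactly two Dirichlet characters modulo $p$ of order $j$; by Lemma \ref{lemma:quarticclass} they are the restrictions to $\mz$ of the two Hecke symbols $\chi_{j,\varpi}$ and $\chi_{j,\bar\varpi}$, where $\varpi,\bar\varpi$ are the two primes of $\mathcal{O}_K$ above $p$. One first checks that the congruence conditions $\varpi\equiv 1\pmod{9}$ and $\varpi\equiv 1\pmod{16}$ are preserved under complex conjugation, so that both $\varpi$ and $\bar\varpi$ are admissible; the contribution of inert rational primes is $O(Q^{1/2+\varepsilon})$ and is absorbed into the error.

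Next, I would apply the approximate functional equation to each $L(1/2,\chi)$, writing it as a smoothed sum of length $\asymp\sqrt{p}$ over rational integers $n$ together with its dual sum. Summing over the two characters of order $j$ modulo $p$ replaces the character sum by $\chi_{j,\varpi}(n)+\overline{\chi_{j,\varpi}(n)}$, and swapping summation orders reduces the problem to estimating, for each $n$, the twisted prime sum
\[
\sumstar_{\varpi}\chi_{j,\varpi}(n)\,\Lambda(\varpi)\,\Phi\!\left(\frac{N(\varpi)}{Q}\right)
\]
and its complex conjugate. Crucially, these sums are of exactly the same shape as those arising in the proof of Theorem \ref{secmom}, so the same analytic machinery applies. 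The diagonal $n=m^{j}$, for which $\chi_{j,\varpi}(n)=1$ whenever $\varpi\nmid n$ (using that a positive rational integer is a $j$-th power in $\mathcal{O}_K$ if and only if it is a $j$-th power in $\mz$), yields the main term: evaluating via the prime number theorem for primes of $\mathcal{O}_K$ in the given congruence class and summing $n^{-1/2}$ over $n=m^{j}$ produces $D_{j}\hat{\Phi}(0)Q$. The factors $\zeta(3/2)$ and $\zeta(2)$ in \eqref{D}, in place of the $\zeta_K$-factors of \eqref{C}, appear precisely because $n$ now ranges over $\mz$ rather than over ideals of $\mathcal{O}_K$.

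The main obstacle is the off-diagonal: bounding the contribution from $n$ which are not $j$-th powers. The sums in the display above are then Gauss-sum-type sums controllable by Patterson's estimate through Lemma \ref{lemg3}, and the balance between the approximate-functional-equation length $\sqrt{p}$ and Patterson's bound---identical to the one carried out in Theorem \ref{secmom}---yields $O(Q^{39/40+\varepsilon})$. The remaining technical care is in verifying that (i) the M\"obius sieving to characters of order exactly $j$ and (ii) the congruence constraints on $\varpi$ introduce at most $Q^{\varepsilon}$ losses, both of which are routine under GRH.
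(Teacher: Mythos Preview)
Your overall route is the paper's own: apply the approximate functional equation \eqref{approxfunc}, rewrite the sum over Dirichlet characters of order $j$ as a sum over primes $\varpi$ of $\mathcal{O}_K$ via Lemma \ref{lemma:quarticclass} and \eqref{taucubic}, and then repeat the argument of Section \ref{section Thm2}. But you have swapped the roles of the two analytic tools, and this breaks the proof as written.

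The sum you display, $\sumstar_{\varpi}\chi_{j,\varpi}(n)\Lambda(\varpi)\Phi(N(\varpi)/Q)$, contains no Gauss sum, and Lemma \ref{lemg3} says nothing about it. For $n$ not a perfect $j$-th power it is simply a sum of a nonprincipal Hecke character over primes; in the paper this is the piece $\mathcal{M}'_1$ of Section \ref{sec thirdthm}, handled (exactly as $M_{1,3}$ in Section \ref{section Thm2}) by Mellin inversion, a contour shift to $\Re s=\tfrac12+\varepsilon$, and the GRH bound for $-L'/L(s,\psi\chi^{(m)}_j)$, giving $O(Q^{1/2+\varepsilon}x^{1/2+\varepsilon})$. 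Patterson's Lemma \ref{lemg3} is applied instead to the \emph{dual} sum $\mathcal{M}_2$, which you mention once but never analyze: that sum carries the weight $\tau(\chi)/\sqrt{p}=g_j(\varpi)/\sqrt{N(\varpi)}$ from the functional equation, and this is precisely the object Lemma \ref{lemg3} estimates. The same mix-up explains why your symmetric choice ``length $\asymp\sqrt{p}$'' cannot work: with $x\asymp Q^{1/2}$ the Patterson bound for $\mathcal{M}_2$ (see \eqref{M2}) is already larger than $Q$. One must keep the free parameter $x$ in \eqref{approxfunc} and optimize at the end; the paper takes $x=Q^{19/20}$, balancing $Q^{1/2+\varepsilon}x^{1/2}$ against $Q^{19/20+\varepsilon}(Q/x)^{1/2}$ to obtain $O(Q^{39/40+\varepsilon})$.
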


   The proof of Theorem \ref{thirdmom} is similar to that of Theorem \ref{secmom} and will be given in Section \ref{sec thirdthm}.
\section{Preliminaries}
\label{sec 2}

In this section, we include some auxiliary results needed in the proofs of our theorems.

\subsection{Residue symbols and Gauss sums}
\label{sec2.4}
   Recall from Section \ref{sec 1} that we set $K$ be a number field of class number one.
Let $n \in \natn$ with $n \geq 2$ and $\mu_n(K)=\{ \zeta \in K^{\times}: \zeta^n=1 \}$ and suppose that $\mu_n(K)$ has $n$ elements. Note that the discriminant of $x^n-1$ is divisible only by the primes dividing $n$ in $\mathcal{O}_K$. It follows that for any prime $\varpi \in \mathcal{O}_K,  (\varpi, n)=1$, we have a bijective map
\begin{align*}
    \zeta \mapsto \zeta \pmod \varpi : \mu_n(K) \rightarrow \mu_n(\mathcal{O}_K/\varpi)=\{ \zeta \in (\mathcal{O}_K/\varpi)^{\times}: \zeta^n=1 \}.
\end{align*}
  For such $\varpi$,  we define the $n$-th power residue symbol $\leg{\cdot}{\varpi}_{n,K}$ in $K$ such that $\leg{a}{\varpi}_{n, K} \equiv
a^{(N(\varpi)-1)/n} \pmod{\varpi}$ with $\leg{a}{\varpi}_{n,K} \in \mu_n(K)$ for any $a \in \mathcal{O}_K$, $(a, \varpi)=1$. When
$\varpi | a$, we define $\leg{a}{\varpi}_{n, K} =0$.  Then these symbols can be extended to any composite $c$ with $(N_K(c), n)=1$ multiplicatively. We further define $\leg {\cdot}{c}_{n, K}=1$ when $c \in U_K$. \newline

   In the remainder of this section, we shall let $K=\mq(\omega)$ with $\omega=\exp(2\pi i/3)$ or $K=\mq(i)$ unless otherwise specified.
It is well-known that both fields have class number one and $\mathcal{O}_{K}=\mz[\omega]$, $\mz[i]$, respectively.
We use $\delta_K$ and $D_K$ for the different and discriminant of $K$, respectively.
In particular, we fix $\delta_{\mq(\omega)}=\sqrt{-3}$, $\delta_{\mq(i)}=2i$, $D_{\mq(\omega)}=-3$, $D_{\mq(i)}=-4$.
We shall reserve the symbol $\leg {\cdot}{\cdot}_3$ for the cubic residue symbol $\leg {\cdot}{\cdot}_{3, \mq(\omega)}$ and
$\leg {\cdot}{\cdot}_4$ for the quartic residue symbol $\leg {\cdot}{\cdot}_{4, \mq(i)}$ in this paper and
we define $\chi_{3,a}=\leg {\cdot}{a}_{3}$ for any $a \in \mz[\omega]$, $\chi_{4,a}=\leg {\cdot}{a}_{4}$ for any $a \in \mz[i]$.  We shall also write
 $\chi_{a}=\leg {\cdot}{a}_{2,K}$ for any $a \in \mathcal{O}_K$ when there is no confusion about $K$ from the context. \newline

Recall that every ideal in $\intz[\omega]$ co-prime to $3$ has a unique generator congruent to $1$ modulo $3$ (see \cite[Proposition 8.1.4]{BEW})
and every ideal in $\intz[i]$ co-prime to $2$ has a unique generator congruent to $1$ modulo $(1+i)^3$
(see the paragraph above Lemma 8.2.1 in \cite{BEW})). These generators are called primary. An element $n=a+b\omega$ in $\mz[\omega]$ is congruent to $1 \pmod{3}$ if and only if $a \equiv 1 \pmod{3}$, and $b \equiv
0 \pmod{3}$ (see the discussions before \cite[Proposition 9.3.5]{I&R}). \newline

  We refer the reader to \cite[Section 2.2]{G&Zhao2019-1} for the quadratic, cubic and quartic reciprocity laws as well as the supplementary laws are applicable to the above quadratic, cubic and quartic symbols. We only point out here that the quadratic symbol $\leg {\cdot}{c}_{2, \mq(\omega)}$ is trivial on
units for any $c \equiv 1 \pmod {36}, c \in \mz[\omega]$ so that for any square-free  $c$,
$\chi_c$ can be regarded as a primitive character of the ray class group $h_{(c)}$. Here we recall that for any number field $K$ of class number one and any $c \in K$, the ray
class group $h_{(c)}$ is defined to be $I_{(c)}/P_{(c)}$, where
$I_{(c)} = \{ \mathcal{A} \in I : (\mathcal{A}, (c)) = 1 \}$ and
$P_{(c)} = \{(a) \in P : a \equiv 1 \pmod{c} \}$ with $I$ and $P$
denoting the group of fractional ideals in $K$ and the subgroup of
principal ideals, respectively. Moreover, the cubic symbol $\leg {\cdot}{c}_{3}$ is trivial on
units for any $c \equiv 1 \pmod {9}, c \in \mz[\omega]$ so that for any square-free  $c$, $\chi_{3,c}$ can be regarded as a primitive character of the ray class group $h_{(c)}$. We also note that the supplement laws to the cubic reciprocity law \cite[Theorem 7.12]{Lemmermeyer} imply that
\begin{align}
\label{cubicsupplyment}
\leg {2}{c}_{2, \mq(\omega)}=\leg {1-\omega}{c}_{2, \mq(\omega)}=1, \quad c \equiv 1 \pmod {36}, \quad
 \leg {1-\omega}{c}_{3}=1, \quad c \equiv 1 \pmod {9}.
\end{align}

 Similarly, $\leg {\cdot}{c}_{2, \mq(i)}$ and $\leg {\cdot}{c}_{4}$ are trivial on units and $1+i$ for any $c \equiv 1 \pmod {16}, c \in \mz[i]$,
so that for any square-free  $c$, $\chi_c$ and  $\chi_{4, c}$ can be regarded as a primitive character of the ray class group $h_{(c)}$. \newline

  We note that if $\varpi$ is a prime such that $N(\varpi)$ is a rational prime, then restricting $\chi_{3, \varpi}$ or $\chi_{4,\varpi}$
on rational integers gives rise to cubic or quartic Dirichlet characters, we shall say that these Dirichlet characters are induced by $\chi_{3, \varpi}$ or $\chi_{4,\varpi}$.
We have the following classification of primitive cubic and quartic
Dirichlet characters of prime conductors, which is a special case of the one given in \cite[Lemma 2.2]{G&Zhao6}.
\begin{lemma}
\label{lemma:quarticclass}
 The primitive cubic Dirichlet characters of prime conductor $p$ co-prime to $3$ are induced by $\chi_{3, \varpi}$ for some prime
$\varpi \in \mz[\omega]$ such that $N(\varpi) = p$. The primitive quartic Dirichlet characters of prime conductor $p$ co-prime to $2$ are induced
by $\chi_{4, \varpi}$ for some prime
$\varpi \in \mz[i]$ such that $N(\varpi) = p$.
\end{lemma}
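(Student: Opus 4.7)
The plan is to combine an explicit construction of cubic/quartic Dirichlet characters from residue symbols with a counting argument. Fix $n \in \{3,4\}$ and the corresponding field $K = \mq(\omega)$ or $\mq(i)$, and consider a rational prime $p$ coprime to $D_K$ (so $p\ne 3$ if $n=3$, $p\ne 2$ if $n=4$). First I would observe two parallel facts: a primitive Dirichlet character of order $n$ modulo $p$ exists iff $n \mid p-1$, since $(\mz/p\mz)^{\times}$ is cyclic of order $p-1$, and in that case there are exactly $\varphi(n)=2$ such characters; and by the classical splitting law for rational primes in $\mathcal{O}_K$, the same condition $n \mid p-1$ is equivalent to $p$ splitting as $p = \varpi\bar{\varpi}$ (up to units), yielding exactly two prime ideals of $\mathcal{O}_K$ above $p$. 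Thus both sides of the claimed correspondence have the same cardinality whenever they are nonempty.

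Next I would construct the map explicitly. Given a prime $\varpi \in \mathcal{O}_K$ with $N(\varpi)=p$, the natural composition $\mz \hookrightarrow \mathcal{O}_K \twoheadrightarrow \mathcal{O}_K/(\varpi)$ is a ring homomorphism from $\mz$ onto a field of $p$ elements, and descends to an isomorphism $\mz/p\mz \xrightarrow{\sim} \mathcal{O}_K/(\varpi)$. Pulling the residue symbol $\chi_{n,\varpi}$ back along this isomorphism gives a Dirichlet character modulo $p$. The defining congruence $\chi_{n,\varpi}(a) \equiv a^{(p-1)/n} \pmod{\varpi}$ shows that the pull-back has order dividing $n$; its order equals $n$ because $a \mapsto a^{(p-1)/n}$ surjects $\mathbb{F}_p^{\times}$ onto the unique subgroup of order $n$, which reduction mod $\varpi$ identifies with $\mu_n(K) \subset \mathcal{O}_K/(\varpi)$.

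Finally I would check that $\varpi$ and $\bar\varpi$ give distinct restricted characters, which finishes the proof by counting. For $a \in \mz$, complex-conjugating $\chi_{n,\varpi}(a) \equiv a^{(p-1)/n} \pmod{\varpi}$ and using $\bar a = a$ yields $\overline{\chi_{n,\varpi}(a)} \equiv a^{(p-1)/n} \pmod{\bar\varpi}$, so the restriction of $\chi_{n,\bar\varpi}$ to $\mz$ is the complex conjugate of the restriction of $\chi_{n,\varpi}$. Since these restrictions take non-real values (primitive $n$-th roots of unity for $n \in \{3,4\}$), they are genuinely different, so the two residue symbols $\chi_{n,\varpi}$ and $\chi_{n,\bar\varpi}$ produce the $\varphi(n)=2$ distinct primitive $n$-th order characters modulo $p$. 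I do not anticipate any serious obstacle; the whole argument is essentially bookkeeping built on the splitting law in $\mathcal{O}_K$ together with the identification $\mz/p\mz \cong \mathcal{O}_K/(\varpi)$.
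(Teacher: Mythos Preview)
Your argument is correct and self-contained: you use the cyclicity of $(\mz/p\mz)^{\times}$ to count the primitive order-$n$ characters, the splitting law in $\mathcal{O}_K$ to count the primes $\varpi$ above $p$, the isomorphism $\mz/p\mz \cong \mathcal{O}_K/(\varpi)$ to realize the restriction of $\chi_{n,\varpi}$ as a Dirichlet character of exact order $n$, and conjugation to see that $\varpi$ and $\overline{\varpi}$ induce distinct (in fact conjugate) characters. One tiny point you leave implicit is primitivity of the induced character, but this is automatic since a nontrivial character of prime modulus is primitive.

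The paper itself gives no proof of this lemma; it simply records the statement as a special case of \cite[Lemma 2.2]{G&Zhao6} and then notes, in the paragraph following the lemma, the consequence that for $p\equiv 1\pmod 3$ (resp.\ $p\equiv 1\pmod 4$) the two characters are induced by $\chi_{3,\varpi}$ and $\chi_{3,\overline{\varpi}}$ (resp.\ $\chi_{4,\varpi}$ and $\chi_{4,\overline{\varpi}}$). Your write-up therefore supplies exactly the elementary verification the paper outsources; the content is the same, but you prove it directly rather than by citation.
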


  In particular, the above Lemma implies that cubic Dirichlet characters of prime conductor $p$ exist if and only if $p \equiv 1 \pmod 3$, in which case there are
two such characters induced by $\chi_{3, \varpi}$ or $\chi_{3, \overline{\varpi}}$, where $\varpi$ is a prime in $\mz[\omega]$ such that
$N(\varpi)=p$. Also, quartic Dirichlet characters of prime conductor $p$ exist if and only if $p \equiv 1 \pmod 4$, in which case there are
two such characters induced by $\chi_{4, \varpi}$ or $\chi_{4, \overline{\varpi}}$, where $\varpi$ is a prime in $\mz[i]$ such that
$N(\varpi)=p$. \newline

  Now we set $e(x)=\exp(2 \pi i x)$ and let $\chi$ be a Dirichlet character of modulus $n$. For any $r \in \mz$, we define the Gauss sum
$\tau(r, \chi)$ as follows:
\begin{align*}
  \tau(r, \chi)=\sum_{x \bmod {n}}\chi(x)e(rx).
\end{align*}
   We also define $\tau(\chi) =\tau(1, \chi)$. \newline

   We further define $\widetilde{e}_K(k) =e(\mathrm{Tr}(k/ \delta_K))$ for any $k \in K$ and we write $\widetilde{e}_{\omega}(z)$ for $\widetilde{e}_{\mq(\omega)}(k)$ and $\widetilde{e}_{i}(z)$ for $\widetilde{e}_{\mq(i)}(k)$. For any $n, r \in \mathcal{O}_{K}, (n,2)=1$, we define
\begin{align*}
 g_2(r,n) = \sum_{x \bmod{n}} \leg{x}{n}_{2,K} \widetilde{e}_{K}\leg{rx}{n}.
\end{align*}
  We define $g_3(r,n)$ for $r, n \in \mathcal{O}_{\mq(\omega)}, (n,3)=1$ and $g_4(r,n)$ for $r, n \in \mathcal{O}_{\mq(i)}, (n,2)=1$ similarly, replacing $2$
in the above expression by $3$ or $4$ respectively. We shall write $g_i(n)$ for $g_i(1,n)$ in what follows for $i=2$, $3$ and $4$. \newline

If $\chi$ is induced by $\chi_{3,\varpi}$ or  $\chi_{4,\varpi}$ for a primary $\varpi$, then it is shown in \cite[Section 2.2]{G&Zhao6} that
\begin{align*}
  \tau(r, \chi) & =\begin{cases}
    \displaystyle \overline{\leg {\sqrt{-3}}{\varpi}}_3 g_3(r, \varpi)=\overline{\leg {\omega(1-\omega)}{\varpi}}_3 g_3(r, \varpi) \qquad & \text{if $\chi$ is induced by $\chi_{3,\varpi}$}, \\ \\
    \displaystyle \overline{\leg {-i}{\varpi}}_4 (-1)^{(\Re(\varpi)^2-1)/8}g_4(r, \varpi) \qquad & \text{if $\chi$ is induced by $\chi_{4,\varpi}$}.
    \end{cases}
\end{align*}

  We apply the above relations to the case when $\varpi \equiv 1 \pmod {9}$ or $\varpi \equiv 1 \pmod {16}$, by noting \eqref{cubicsupplyment} and that $\chi_{3, \varpi}$ and $\chi_{4,\varpi}$ are
trivial on the units for these $\varpi$ to arrive at
\begin{align}
\label{taucubic}
  \tau(r, \chi)& =\begin{cases}
     g_3(r, \varpi)  \qquad & \text{if $\varpi \equiv 1 \pmod {9}$}, \\
     g_4(r, \varpi) \qquad & \text{if $\varpi \equiv 1 \pmod {16}$}.
    \end{cases}
\end{align}

\subsection{The approximate functional equation}

   Let $K =\mq(i)$ or $\mq(\omega)$ and $\chi$ be a primitive Hecke character $\pmod {m}$ of trivial infinite type defined on $\mathcal{O}_K$.
As shown by E. Hecke, $L(s, \chi)$ admits
analytic continuation to an entire function and satisfies the
functional equation (\cite[Theorem 3.8]{iwakow})
\begin{align}
\label{1.1}
  \Lambda(s, \chi) = W(\chi)(N(m))^{-1/2}\Lambda(1-s, \overline{\chi}),
\end{align}
   where $|W(\chi)|=(N(m))^{1/2}$ and
\begin{align*}
  \Lambda(s, \chi) = (|D_K|N(m))^{s/2}(2\pi)^{-s}\Gamma(s)L(s, \chi).
\end{align*}

   We recall from \cite[Section 2.4]{G&Zhao2019-1} that for any $x>1$, we have the following approximate functional equation for $L(1/2+it, \chi)$:
\begin{equation} \label{approxfuneq}
\begin{split}
 L \left( \frac{1}{2}+it, \chi \right) = \sum_{0 \neq \mathcal{A} \subset
  \mathcal{O}_K} & \frac{\chi(\mathcal{A})}{N(\mathcal{A})^{1/2+it}}V_t \left(\frac{2\pi  N(\mathcal{A})}{x} \right) \\
  & + \frac{W(\chi)}{N(m)^{1/2}}\left(\frac {(2\pi)^2}{|D_k|N(m)} \right )^{it} \frac {\Gamma (1/2-it)}{\Gamma (1/2+it)}\sum_{0 \neq \mathcal{A} \subset
  \mathcal{O}_K}\frac{\overline{\chi}(\mathcal{A})}{N(\mathcal{A})^{1/2-it}}V_{-t}\left(\frac{2\pi
  N(\mathcal{A})x}{|D_K|N(m)} \right),
     \end{split}
\end{equation}
    where
\begin{align}
\label{2.14}
  V_t \left(x \right)=\frac {1}{2\pi
   i}\int\limits\limits_{(2)}\frac {\Gamma(s+1/2+it)}{\Gamma (1/2+it)} \frac
   {x^{-s}}{s} \ \dif s.
\end{align}

    Similarly, let $\chi$ be a primitive Dirichlet character $\chi$ of conductor $q$ such that $\chi(-1)=1$ and let $A$ and $B$ be positive real numbers such that $AB = q$,
we recall the following approximate functional equation for Dirichlet $L$-functions given in \cite[Theorem 5.3]{iwakow}:
\begin{align}
\label{approxfunc}
L \left( \frac 1{2}, \chi \right) = \sum_{m=1}^{\infty} \frac{\chi(m)}{m^{1/2}} W \left(\frac{m}{A}\right) + \frac {\tau(\chi)}{q^{1/2}}
\sum_{m=1}^{\infty} \frac{\overline{\chi}(m)}{m^{1/2}} W \left(\frac{m}{B}\right), \; \mbox{where} \; W(x) = \frac{1}{2\pi i} \int\limits_{(2)}  \pi^{-s/2} \frac{\Gamma\left(1/4 + s/2 \right)}{\Gamma\left(1/4\right)} x^{-s} \frac{ \dif s}{s}.
\end{align}

      We write $V$ for $V_0$ and note that (see \cite[Lemma 2.1]{sound1}) both $V(x)$ and $W(x)$ are real-valued and smooth on $[0, \infty)$ such that
for the $j$-th derivative of $V(x)$ and $W(x)$,
\begin{equation} \label{2.07}
      V\left (x \right), W(x) = 1+O(x^{1/2-\varepsilon}) \; \mbox{for} \; 0<x<1   \quad \mbox{and} \quad V^{(j)}\left (x \right), W^{(j)}\left (x \right) =O(e^{-x}) \; \mbox{for} \; x >0, j \geq 0.
\end{equation}

  When $\chi$ is a quadratic Hecke character, we have $\chi=\overline{\chi}$ so that by setting $x=1/2$ in \eqref{1.1}, we deduce that
\begin{align*}
  W(\chi)=N(m)^{1/2}.
\end{align*}
   It follows from this and \eqref{approxfuneq} by setting $x=(|D_K|N(m))^{1/2}$ that
\begin{equation} \label{quadapproxfuneqQi}
 L \left( \frac{1}{2}, \chi \right) = 2\sum_{0 \neq \mathcal{A} \subset
  \mathcal{O}_K}\frac{\chi_c(\mathcal{A})}{N(\mathcal{A})^{1/2}}V \left(\frac{2\pi N(\mathcal{A}) }{(|D_K|N(m))^{1/2}} \right).
  \end{equation}

\subsection{Estimation of certain Gauss sums}
\label{section: smooth Gauss}

     In the proof of Theorem \ref{secmom} and \ref{thirdmom}, we need a result of S. J. Patterson in \cite{P} to estimate certain Gauss sums over primes. We state
the result in the following
\begin{lemma}
\label{lemg3} Let $\omega$ denote a prime in $\mz[\omega]$ or $\mz[i]$. Let $\psi$ be any ray class character $\pmod 9$ in $\mz[\omega]$ or
any ray class character $\pmod {16}$ in $\mz[i]$.
For $x>1$  we have for any $(a, 3)=1$, $a \in \mz[\omega]$ and any $(b, 2)=1$, $b \in \mz[i]$,
\begin{align*}
\begin{split}
   \sum_{\substack {\varpi \equiv 1 \pmod {9} \\ N(\varpi) \leq x}}\overline{\leg {a}{\varpi}}_3 \frac {\psi(\varpi) g_3(\varpi) \Lambda(\varpi)}{\sqrt{N(\varpi)}}  \ll & x^{\varepsilon}(N(a)^{1/8}x^{27/32}+x^{1-1/20}), \\
  \sum_{\substack {\varpi \equiv 1 \pmod {(1+i)^3} \\ N(\varpi) \leq x}}\overline{\leg {b}{\varpi}}_4 \frac {\psi(\varpi) g_4(\varpi) \Lambda(\varpi)}{\sqrt{N(\varpi)}} \ll & x^{\varepsilon}(N(b)^{1/10}x^{1-1/10}+N(b)^{1/8+\varepsilon}x^{1-1/8}+x^{1-1/20}).
\end{split}
\end{align*}
\end{lemma}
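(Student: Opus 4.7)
The plan is to derive both estimates by combining Patterson's analytic results on cubic and quartic Gauss sums with a combinatorial identity for the von Mangoldt function; I will describe the cubic case, as the quartic case runs along the same lines with the cubic metaplectic cover replaced by the quartic one over $\mq(i)$. First, I would apply Vaughan's identity (or Heath-Brown's identity for primes in $\mz[\omega]$) to express $\Lambda(\varpi)$ as a combination of convolutions $\sum_{mn=\varpi}\alpha_m\beta_n$, separated into ``Type I'' pieces (in which $\beta_n$ is smooth and $m$ is constrained to a short range) and ``Type II'' bilinear pieces. Substituting this decomposition into the original sum and using the twisted multiplicativity $g_3(mn) = \leg{m}{n}_3 \leg{n}{m}_3 g_3(m) g_3(n)$ for coprime primary $m,n$, the Gauss sum splits as a product twisted by residue symbols, absorbing the weight $\overline{\leg{a}{\cdot}}_3 \psi(\cdot)$ into new Hecke-type twists on the outer and inner variables.

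For the Type I terms, the inner sum takes the form $\sum_n \overline{\leg{am}{n}}_3 \psi(n) g_3(n) \Phi(N(n)/y)$, which is precisely the canonical object bounded by Patterson. Concretely, the generating Dirichlet series $\sum_n \overline{\leg{am}{n}}_3 \psi(n) g_3(n) N(n)^{-s}$ arises as a Fourier coefficient of a metaplectic Eisenstein series on the cubic cover of $\mathrm{GL}_2$ over $\mq(\omega)$, and consequently admits meromorphic continuation with an explicit pole structure and polynomial growth in vertical strips, all dependence on the twist being tracked through the conductor $N(am)$. A contour shift then produces a pointwise estimate for the inner smooth sum which, after summing in $m$ subject to the Vaughan cutoff, contributes the $N(a)^{1/8} x^{27/32}$ term in the cubic bound; the analogous treatment in the quartic case, where Patterson's subconvex exponent is weaker, yields the two mixed terms $N(b)^{1/10} x^{1-1/10}$ and $N(b)^{1/8+\varepsilon} x^{1-1/8}$.

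For the Type II terms, I would apply Cauchy-Schwarz to detach the $n$-summation and then dualize, controlling the resulting mean square either by a cubic (resp.\ quartic) large-sieve inequality or by a mean-square version of Patterson's estimate. The key point is that after Cauchy-Schwarz the conductor of the outer twist factors out of the bilinear structure, so the resulting contribution is independent of $N(a)$ and $N(b)$; this is the source of the ``clean'' term $x^{1-1/20}$ that appears uniformly in both estimates. The main obstacle will be the Type II step, because the available cubic and quartic large sieves are considerably weaker than in the quadratic case, and the balancing of Type I against Type II through the Vaughan cutoff is what forces the specific exponents $27/32$ and $1-1/20$. A secondary technicality is maintaining the congruences $\varpi \equiv 1 \pmod{9}$ and $\varpi \equiv 1 \pmod{(1+i)^3}$ throughout the decomposition, so that $\psi$ and the residue symbols descend to honest ray class characters and the reciprocity and supplementary laws from Section \ref{sec2.4} can be applied cleanly to reshuffle the twists between the outer and inner variables.
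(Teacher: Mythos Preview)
Your outline is essentially correct and matches what the paper does, although the paper itself gives almost no detail: it simply cites the estimate for $E(x;k,l)$ in \cite[Section~4.1]{G&Zhao4} together with Patterson's bounds \cite{P} on the Dirichlet series of twisted cubic Gauss sums (the analytic continuation, residue at $s=1+1/n$, and convexity bound), and says the cubic case ``can be obtained similarly.'' The underlying argument in \cite{G&Zhao4} is precisely of the Heath--Brown--Patterson type you describe: a Vaughan-style decomposition of $\Lambda$, Type~I sums handled by contour-shifting the Patterson series (this is where the conductor dependence $N(a)^{1/8}$, $N(b)^{1/10}$, $N(b)^{1/8}$ enters), and Type~II bilinear sums handled by Cauchy--Schwarz plus reciprocity, yielding the conductor-free $x^{1-1/20}$ term. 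So your plan and the paper's cited proof coincide; you have in fact supplied more of the mechanism than the paper does.

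One small correction: the twisted multiplicativity of $g_3$ for coprime primary $m,n$ is $g_3(mn)=\leg{m}{n}_3^{-1}g_3(m)g_3(n)$ (or the conjugate, depending on normalization), not the symmetric product you wrote; this is harmless for the argument but worth getting right when you reshuffle the twists. Also, for the Type~II sums the relevant input in \cite{G&Zhao4} and in Heath--Brown--Patterson is not really a large sieve but a direct mean-value estimate obtained from reciprocity and Patterson's series again, so your ``or'' is the operative word there.
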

\begin{proof}
  The second estimation follows from the estimation of $E(x;k,l)$ given in the end of Section 4.1 of \cite{G&Zhao4} by setting $k=b$, $l=1$ there and by noting that a further twist
of each term in the sum in $E(x;k,l)$  by $\psi$ will not affect the bound. The first
estimation can be obtained similarly by using the  (with the notations being those used in \cite{G&Zhao4}) bounds
\begin{align*}
  \psi & \ll N(d)^{\frac 1{2} (\frac 32-\Re(s)+\varepsilon)}N(b)^{\frac {3}{4}n-1-\frac {1}{2} n \Re(s)+2\varepsilon}(1+|s|^2)^{\text{Card}\sum_{\infty}(k) \cdot \frac 1{2} (n-1)(\frac 32-\Re(s)+\varepsilon)}, \\
  \text{Res}_{s=1+1/3}\psi & \ll N(d)^{-\frac 1{6}}N(b)^{-1}
\end{align*}
  in the proof of \cite[Lemma 4.2]{G&Zhao4} (the second estimation above being a consequence of \cite[Lemma, p. 200]{P}), and following the arguments there.
\end{proof}

\section{Proof of Theorem~\ref{firstmoment}}
\label{section Thm1}

    As the proofs for $K=\mq(\omega)$ and $\mq(i)$ are similar,  we will only give the proof for the case $K=\mq(\omega)$ here. We shall hence fix $K=\mq(\omega)$
throughout the proof. We apply \eqref{quadapproxfuneqQi} and arrive at
\begin{align}
\label{sumprimetointeger}
\begin{split}
   \sum_{\varpi \equiv 1 \bmod {36}}L \left( \frac{1}{2},
   \chi_{\varpi} \right) \Lambda(\varpi) \Phi\left( \frac{N(\varpi)}{y} \right) =& 2\sum_{\varpi \equiv 1 \bmod {36}} \ \sum_{0 \neq \mathcal{A} \subset
  O_K} \frac{\chi_{\varpi}(\mathcal{A})\Lambda(\varpi) }{N(\mathcal{A})^{1/2}}V \left(\frac{2\pi N(\mathcal{A}) }{(3N(\varpi))^{1/2}} \right)\Phi\left( \frac{N(\varpi)}{y} \right) \\
=& M +O \left( y^{3/4+\varepsilon} \right) ,
\end{split}
\end{align}
  where
\begin{align}
\label{M}
  M=2\sum_{c \equiv 1 \bmod {36}} \ \sum_{0 \neq \mathcal{A} \subset
  O_K} \frac{\chi_{c}(\mathcal{A})\Lambda(c) }{N(\mathcal{A})^{1/2}}V \left(\frac{2\pi N(\mathcal{A}) }{(3N(c))^{1/2}} \right)\Phi\left( \frac{N(c)}{y} \right).
\end{align}
Here the first summation in the defintion of $M$ runs over all elements $c \in \mz[\omega]$ and the last equality in \eqref{sumprimetointeger} follows from the rapid decay of $\Phi$ and $V$ given in \eqref{2.07} so that the contribution from the higher prime power is
\begin{align*}
 \ll \sum_{\substack{ N(\varpi^k) \ll y^{1+\varepsilon} \\ k \geq 2 }}  \sum_{\substack{ 0 \neq \mathcal{A} \subset
  O_K \\ N(\mathcal{A}) \ll N(\varpi^k)^{1/2+\varepsilon}}} \frac{1}{N(\mathcal{A})^{1/2}} \ll y^{3/4+\varepsilon}.
\end{align*}

   For $(a, 6)=1$, we define a Hecke character $\chi^{(a)} \pmod {36a}$ such that for any ideal $(c)$ co-prime to $6$, with $c$ being the unique primary generator of $(c)$, $\chi^{(a)}((c))$ is defined as $\chi^{(a)}((c))=\leg {a}{c}$. One checks easily that $\chi^{(a)}$ is a Hecke character $\pmod {36a}$ of trivial infinite type. By an abuse of notation, we shall also write $\chi^{(a)}(c)$ for $\chi^{(a)}((c))$.  In particular, we have $\chi_c(a)=\chi^{(a)}(c)$. Note that any integral non-zero ideal $\mathcal{A}$ in $\mz[\omega]$ has a unique generator
$2^{r_1}(1-\omega)^{r_2}a$, with $r_1, r_2\in \intz, r_1,r_2 \geq 0 , a \in \intz[\omega]$,  $(a, 2)=1, a \equiv 1 \pmod 3$, it follows from this and our discussions above that
$\chi_{c}(\mathcal{A}) = \chi^{(a)}(c)$. Thus we have
\[  M = 2 \sum_{\substack{r_1,r_2 \geq 0 \\ (a,2)=1 \\ a \equiv 1 \bmod 3}} \frac{1}{2^{r_1}3^{r_2/2}N(a)^{1/2}}M(r,a), \]
where
\[ M(r,a)= \sum_{c \equiv 1 \bmod {36}}\chi^{(a)}(c)\Lambda(c)V \left( \frac{\pi  2^{2r_1+1}3^{r_2-1/2}N(a)}{N(c)^{1/2}} \right )\Phi\left( \frac{N(c)}{y} \right). \]

  We set
\begin{align*}
  \tilde{f}(s)=\int\limits^{\infty}_{0}V \left( \frac{\pi 2^{2r_1+1}3^{r_2-1/2}N(a)}{(xy)^{1/2}} \right) \Phi(x) x^{s-1} \dif x.
\end{align*}

Integration by parts and using \eqref{2.07} shows that $\tilde{f}(s)$ is a function satisfying the bound for all $\Re(s) > 0$, and $E>0$,
\begin{align}
\label{3.1}
  \tilde{f}(s) \ll (1+|s|)^{-E} \left( 1+\frac{ 2^{2r_1+1}3^{r_2-1/2}N(a)}{y^{1/2}} \right)^{-E}.
\end{align}

   We now apply Mellin inversion to see that
\begin{align}
\label{Mra}
\begin{split}
   M(r,a) =& \sum_{\substack{ c \equiv 1 \bmod {36}}} \chi^{(a)}(c)\Lambda(c)\frac 1{2\pi i}\int\limits_{(2)} \left( \frac{y}{N(c)} \right)^s \tilde{f}(s) \ \dif s \\
   =& \frac 1{2\pi i}\int\limits_{(2)}\tilde{f}(s) y^s \sum_{\substack{ c \equiv 1 \bmod {36}}}\frac {\chi^{(a)}(c)\Lambda(c)}{N(c)^s} \dif s = \frac {1}{\#h_{(36)}}\sum_{\psi \bmod {36}}\frac {1}{2\pi
   i}\int\limits\limits_{(2)}\tilde{f}(s) y^s \left (- \frac {L'(s, \psi\chi^{(a)})}{L(s, \psi\chi^{(a)})} \right )\dif s.
\end{split}
\end{align}
   where the last equality above follows from using the ray class characters to detect the
condition that $c \equiv 1 \bmod {36}$ with $\psi$ running over all ray class characters $\pmod {36}$. \newline

   We shift the contour of integration to $1/2+\varepsilon$ in the last integral of \eqref{Mra} to evaluate $M$.  By doing so,
we encounter a pole at $s = 1$ of $L'(s, \psi\chi^{(a)})/L(s, \psi\chi^{(a)})$ when $\psi\chi^{(a)}$ is principal, mindful of our assumption of GRH.  We set $M_0$ to be the contribution to $M$ of these residues, and $M_1$ to be the remainder. \newline

   We treat $M_1$ by bounding everything by absolute values and use \eqref{3.1} to get that for any $E>0$,
\begin{align}
\label{3.2}
   M_1 \ll y^{1/2+\varepsilon} \sum_{\psi \bmod {36}}  \sum_{\substack{r_1,r_2 \geq 0 \\ (a,2)=1 \\ a \equiv 1 \bmod 3}}
\frac{1}{2^{r_1}3^{r_2/2}N(a)^{1/2}}\left( 1+\frac{ 2^{2r_1+1}3^{r_2-1/2}N(a)}{y^{1/2}} \right)^{-E}\int\limits^{\infty}_{-\infty}
\left| \frac {L'\left(1/2+it, \psi\chi^{(a)} \right)}{L \left(1/2+it, \psi\chi^{(a)}\right)} \right| (1+|t|)^{-E} \dif t.
\end{align}
  Note that it follows from \cite[Theorem 5. 17]{iwakow} that
\begin{align*}
    \frac {L'\left(1/2+it, \psi\chi^{(a)} \right)}{L \left(1/2+it, \psi\chi^{(a)} \right)} \ll (\log (N(a)(1+|s|)))^{1+\varepsilon}.
\end{align*}

  Applying this in \eqref{3.2} and note that we can restrict the sum over $r_1$, $r_1$, $a$ to be $2^{2r_1+1}3^{r_2-1/2}N(a) \leq y^{1/2+\varepsilon}$, we immediately deduce that
\begin{align}
\label{M1}
   M_1 \ll y^{3/4+\varepsilon}.
\end{align}

    To determine $M_0$, we note that $\psi\chi^{(a)}$ is principal if and only if both $\psi$ and $\chi^{(a)}$ are principal. Hence $a$ must be a square. We denote $\psi_0$ for the principal
    ray class character $\pmod {36}$. Then we have
\begin{align*}
   L(s, \psi_0\chi^{(a^2)})=\zeta_{\mq(\omega)}(s)\prod_{(\varpi) |(6a)} \left(1-N(\varpi)^{-s} \right).
\end{align*}

   Since $\zeta_{\mq(\omega)}(s)$ has a simple pole at $s=1$, it follows that
\begin{align*}
  M_0 &=\frac {2y}{\#h_{(36)}} \sum_{\substack{r_1,r_2 \geq 0 \\ (a,2)=1 \\ a \equiv 1 \bmod 3}}  \frac{1}{2^{r_1}3^{r_2/2}N(a)}\tilde{f}(1)
  \\
  &=\frac {2y}{\#h_{(36)}} \sum_{\substack{r_1,r_2 \geq 0}}  \frac{1}{2^{r_1}3^{r_2/2}} \int\limits_{\mr}\Phi(x)
\sum_{\substack{(a,2)=1 \\ a \equiv 1 \bmod 3}} \frac{1}{N(a)}V \left( \frac{\pi 2^{2r_1+1}3^{r_2-1/2}N(a)^2}{(xy)^{1/2}} \right) \dif x.
\end{align*}

   Now, we apply the definition of $V$ given in \eqref{2.14} corresponding to $t=0$ there to see that
\begin{align}
\label{M0inner}
\begin{split}
& \sum_{\substack{(a,2)=1 \\ a \equiv 1 \bmod 3}} \frac{1}{N(a)}V \left( \frac{\pi 2^{2r_1+1}3^{r_2-1/2}N(a)^2}{(xy)^{1/2}} \right)\\
=& \frac {1}{2\pi
   i}\int\limits\limits_{(2)}\frac {\Gamma(s+1/2)}{\Gamma (1/2)} \Big ( \frac {(xy)^{1/2}}{\pi 2^{2r_1+1}3^{r_2-1/2}} \Big )^s \zeta_K(1+2s)
   \left ( \prod_{\mathfrak{p} | 6} \left( 1 - N(\mathfrak{p})^{-(1+2s)} \right) \right ) \ \frac
   {\dif s}{s} .
\end{split}
\end{align}
  Here and in what follows, we denote $\mathfrak{p}$ for prime ideals in $\mathcal{O}_K$. \newline

   By shifting the line of integration in \eqref{M0inner} to $\Re(s)=-1/4+\varepsilon$,  we encounter a double pole at $s=0$ with residue being  (by taking note that the residue of $\zeta_K(s)$ at $s = 1$ is $\sqrt{3}\pi/9$)
\begin{align*}
\displaystyle \frac {\pi}{12 \sqrt{3}}\log \frac {(xy)^{1/2}}{\pi 2^{2r_1+1}3^{r_2-1/2}} +B 
\end{align*}
  for some constant $B$.  Now, the convexity bound for $\zeta_K(s)$ (see \cite[Exercise 3, p. 100]{iwakow}) implies that for $\Re(s) =-1/4+\varepsilon$,
\begin{align*}
  \zeta_K(1+2s) \ll \left( 1+|s|^2 \right)^{1/4+\varepsilon}.
\end{align*}
  It follows from this that the integral over the line $\Re(s)=-1/4+\varepsilon$ gives a contribution of $O(y^{3/4+\varepsilon})$ to $M_0$.  We thus conclude by noting that $\#h_{(36)}=108$ and a straightforward calculation that
\begin{align} \label{M0}
  M_0 =\frac {(1+\sqrt{3})\pi}{1296}\hat{\Phi}(0) y\log y +B_{\mq(\omega)} y+O \left( y^{3/4+\varepsilon} \right),
\end{align}
   for some constant $B_{\mq(\omega)}$, which is the same constant $B_{\mq(\omega)}$ appearing in \eqref{1stmom}. \newline

  Combining \eqref{sumprimetointeger}, \eqref{M1} and \eqref{M0}, the proof of Theorem \ref{firstmoment} is now complete.

\section{Proof of Theorem~\ref{secmom}}
\label{section Thm2}
   Once again the proofs for $K=\mq(\omega)$ and $\mq(i)$ are similar.  So we shall fix $K=\mq(\omega)$ throughout this section to
give a proof for this case only.
 Starting with the approximate functional equation \eqref{approxfuneq},
and similar to the derivation of \eqref{sumprimetointeger}, we have
\begin{align}
\label{sumprimetointeger1}
\begin{split}
  & \sum_{\varpi \equiv 1 \bmod {9}}L \left( \frac{1}{2},
   \chi_{3,\varpi} \right) \Lambda(\varpi) \Phi\left( \frac{N(\varpi)}{y} \right)  \\
 =&  \sum_{\varpi  \equiv 1 \bmod {9}} \ \sum_{0 \neq \mathcal{A} \subset
  \mathcal{O}_K} \frac{\chi_{3,\varpi} (\mathcal{A})\Lambda(\varpi)}{N(\mathcal{A})^{1/2}}V \left(\frac{2\pi}{x}
  N(\mathcal{A} ) \right)\Phi \left( \frac{N(\varpi)}{y} \right)
 \\
& \hspace*{2cm} +\sum_{\varpi \equiv 1 \bmod {9}}\frac{W(\chi_{3,\varpi} )}{N(\varpi)^{1/2}} \sum_{0 \neq \mathcal{A} \subset
  \mathcal{O}_K}\frac{\overline{\chi}_{3, \varpi}(\mathcal{A})\Lambda(\varpi)}{N(\mathcal{A})^{1/2}}V \left( \frac{2\pi
  N(\mathcal{A})x}{|D_K|N(\varpi)} \right)\Phi \left( \frac{N(\varpi)}{y} \right) \\
=& \sum_{c \equiv 1 \bmod {9}} \ \sum_{0 \neq \mathcal{A} \subset
  \mathcal{O}_K} \frac{\chi_{3,c}(\mathcal{A})\Lambda(c)}{N(\mathcal{A})^{1/2}}V \left(\frac{2\pi}{x}
  N(\mathcal{A} ) \right)\Phi \left( \frac{N(c)}{y} \right) \\
&\hspace*{2cm}  + \sum_{\varpi \equiv 1 \bmod {9}}\frac{W(\chi_{3,\varpi})}{N(\varpi)^{1/2}} \sum_{0 \neq \mathcal{A} \subset
  \mathcal{O}_K}\frac{\overline{\chi}_{3,\varpi}(\mathcal{A})\Lambda(\varpi)}{N(\mathcal{A})^{1/2}}V \left( \frac{2\pi
  N(\mathcal{A})x}{|D_K|N(\varpi)} \right)\Phi \left( \frac{N(\varpi)}{y} \right)+O \left( y^{1/2+\varepsilon}x^{1/2+\varepsilon} \right) \\
:=& {\sum}_1 +  {\sum}_2+ O \left( y^{1/2+\varepsilon}x^{1/2+\varepsilon} \right),
\end{split}
\end{align}
   where $x>1$ is to be chosen later and the summation in ${\sum}_1$ is over all elements $c \in \mz[\omega]$. \newline

  Similar to our discussions in Section \ref{section Thm1}, we can write any integral non-zero ideal $\mathcal{A}$ in $\mz[\omega]$ as
$\mathcal{A}=((1-\omega)^{r}a)$, with $r \in \intz, r \geq 0 , a \in \intz[\omega]$,  $a \equiv 1 \pmod 3$. For $(a,3)=1$,
we define a Hecke character $\chi^{(a)}_3 \pmod {9a}$ such that for any ideal $(c)$ co-prime to $3$, with $c$ being the unique primary generator of $(c)$,
$\chi^{(a)}_3((c))$ is defined as $\chi^{(a)}_3((c))=\leg {a}{c}_3$. One checks easily that $\chi^{(a)}_3$ is a Hecke character $\pmod {9a}$ of trivial infinite
type. By an abuse of notation, we shall also write $\chi^{(a)}_3(c)$ for $\chi^{(a)}_3((c))$. It follows from this and our discussions above that
$\chi_{3,c}(\mathcal{A}) = \chi^{(a)}_3(c)$ for $c \equiv 1 \bmod {9}$. By further noting that $W(\chi_{3,\varpi})=g_3(\varpi)$ by \cite[(3.86)]{iwakow}, we thus obtain
\begin{align*}
{\sum}_1=&  \sum_{\substack{r \geq 0 \\ a \equiv 1 \bmod 3}} \frac{1}{3^{r/2}N(a)^{1/2}}V\left( \frac{2\pi 3^{r}N(a)}{x}  \right ) M_3(r,a), \\
    {\sum}_2 =& \sum_{\substack{r \geq 0 \\ a \equiv 1 \bmod 3}} \frac{1}{3^{r/2}N(a)^{1/2}} \sum_{\varpi \equiv 1 \bmod {9}} \frac {g_3(\varpi)
\overline{\chi}^{(a)}_3(\varpi) \Lambda(\varpi)}{N(\varpi)^{1/2}} V \left( \frac{2\pi
  3^{r}N(a) x}{|D_K|N(\varpi)} \right)\Phi \left( \frac{N(\varpi)}{y} \right),
\end{align*}
where
\[ M_3(r,a)= \sum_{c \equiv 1 \bmod {9}}\chi^{(a)}_3(c)\Lambda(c)  \Phi \left( \frac{N(c)}{y} \right). \]

  We treat ${\sum}_2$ by using the ray class characters to detect the
condition that $c \equiv 1 \bmod {9}$ and then applying Lemma \ref{lemg3} and partial summation to see that (note that we can assume $N(\varpi) \ll y^{1+\varepsilon}$ and $3^rN(a)x \ll  y^{1+\varepsilon}$ in view of the exponential decay of the test functions)
\begin{align*}
   \sum_{\varpi \equiv 1 \bmod {9}} & \frac {g_3(\varpi)
\overline{\chi}^{(a)}_3(\varpi) \Lambda(\varpi)}{N(\varpi)^{1/2}} V \left( \frac{2\pi
  3^{r}N(a) x}{|D_K|N(\varpi)} \right)\Phi \left( \frac{N(\varpi)}{y} \right) \\
=& \sum_{\psi \bmod {9}} \ \sum_{\varpi \equiv 1 \bmod {3}}\frac {\psi(\varpi)g_3(\varpi)
\overline{\chi}^{(a)}_3(\varpi) \Lambda(\varpi)}{N(\varpi)^{1/2}}  V \left( \frac{2\pi
  3^{r}N(a) x}{|D_K|N(\varpi)} \right)\Phi \left( \frac{N(\varpi)}{y} \right) \\
   \ll & \int\limits^{y^{1+\varepsilon}}_1 V \left( \frac{2\pi
  3^{r}N(a) x}{|D_K|u} \right)\Phi \left( \frac{u}{y} \right ) \dif O\left( u^{\varepsilon} \left( N(a)^{1/8}u^{27/32}+u^{1-1/20} \right) \right)
   \ll N(a)^{1/8}y^{27/32+\varepsilon}+y^{1-1/20+\varepsilon},
\end{align*}
    where $\psi$ runs over all ray class characters $\pmod {9}$. \newline

    It follows that
\begin{align}
\label{sum2}
    {\sum}_2 \ll \sum_{\substack{3^rN(a)x \ll  y^{1+\varepsilon}}} \frac{1}{3^{r/2}N(a)^{1/2}} \left( N(a)^{1/8}y^{27/32+\varepsilon}+y^{1-1/20+\varepsilon} \right)
 \ll  y^{27/32+\varepsilon} \left( \frac yx \right)^{1/2+1/8}+y^{1-1/20+\varepsilon} \left(\frac yx \right)^{1/2}.
\end{align}

  To treat $\sum_1$, we apply Mellin inversion to see that
\[   M_3(r,a) = \frac 1{2\pi i}\int\limits_{(2)}\widetilde{\Phi}(s) y^s\sum_{c \equiv 1 \bmod {16}}\frac {\chi^{(a)}_4(c)\Lambda(c)}{N(c)^s} \dif s,
\quad \mbox{where} \quad \widetilde{\Phi}(s)=\int\limits^{\infty}_{0}\Phi(x) x^{s-1} \dif x.\]

 We note here that integration by parts shows that $\widetilde{\Phi}(s)$ satisfies the following bound for all $\Re(s) > 0$, and $E_1>0$,
\begin{align*}
  \widetilde{\Phi}(s) \ll (1+|s|)^{-E_1} .
\end{align*}

   We now use the ray class characters again to detect the
condition that $c \equiv 1 \bmod {9}$, getting
\begin{align}
\label{M3}
\begin{split}
  M_3(r,a)=\frac {1}{\#h_{(9)}}\sum_{\psi \bmod {9}}\frac {1}{2\pi
   i}\int\limits\limits_{(2)}\widetilde{\Phi}(s) y^s \left (-\frac {L'(s, \psi\chi^{(a)}_3)}{L(s, \psi\chi^{(a)}_3)} \right ) \dif s,
\end{split}
\end{align}
   where $\psi$ runs over all ray class characters $\pmod {9}$, $\#h_{(9)}=9$. \newline

   We estimate $\sum_1$ by shifting the contour of integration in \eqref{M3} to the line $\Re(s)=1/2+\varepsilon$. We encounter poles at $s=1$ when
$\psi\chi^{(a)}_3$ is principal and we set $M_{0,3}$ to be the contribution to $\sum_1$ of these residues, and $M_{1,3}$ to be the
remainder. To treatment of $M_{1,3}$ can be done similar to that of $M_1$ in Section \ref{section Thm1}, and we have
\begin{align}
\label{M13}
   M_{1,3} \ll y^{1/2+\varepsilon}x^{1/2+\varepsilon}.
\end{align}

    We now determine $M_{0,3}$ by noting that $\psi\chi^{(a)}_3$ is principal if and only if both $\psi$ and $\chi^{(a)}_3$ are principal.
Hence $a$ must be a cubic. Similar to the treatment in Section \ref{section Thm1}, we denote $\psi_0$ for the principal
    ray class character $\pmod {9}$ to see that
\begin{align*}
  M_{0,3} &=\frac {y}{\#h_{(9)}}\hat{\Phi}(0) \sum_{\substack{r \geq 0 \\ a \equiv 1 \bmod {3}}} \frac{1}{3^{r/2}N(a)^{3/2}}V\left( \frac{2\pi 3^{r/2}N(a)^3}{x}  \right ).
\end{align*}

   We now apply \eqref{2.07} to see that
\begin{align*}
 \sum_{\substack{r \geq 0 \\ a \equiv 1 \bmod {3}}} & \frac{1}{3^{r/2}N(a)^{3/2}}  V\left( \frac{2\pi 3^{r}N(a)^3}{x}  \right ) \\
  =&
 \sum_{\substack{r \geq 0 \\ a \equiv 1 \bmod {3} \\ 2\pi 3^{r}N(a)^3 \leq x }} \frac{1}{3^{r/2}N(a)^{3/2}}
\left( 1+\left( \frac{2\pi 3^{r}N(a)^3}{x}  \right )^{1/2-\varepsilon} \right ) + \sum_{\substack{r \geq 0 \\ a \equiv 1 \bmod {3} \\ 2\pi 3^{r}N(a)^3 > x }} \frac{1}{3^{r/2}N(a)^{3/2}}\left( \frac {x}{2\pi 3^{r}N(a)^3}  \right ) \\
 =& \frac {3\sqrt{3}-1}{3\sqrt{3}-3} \zeta_{\mq(\omega)} \left(\frac 32 \right)+O \left( \frac {1}{x^{1/6}} \right).
\end{align*}

  We then conclude that by a straightforward calculation that
\begin{align} \label{M0formula}
  M_{0,3} =C_{\mq(\omega)}\hat{\Phi}(0) y +O(yx^{-1/6}),
\end{align}
  where $C_{\mq(\omega)}$ is given in \eqref{C}. \newline

  By combining the estimations given in \eqref{sumprimetointeger1}, \eqref{sum2}, \eqref{M13} and \eqref{M0formula}, we obtain that
\begin{align*}
   \sum_{\varpi \equiv 1 \bmod {9}}L \left( \frac{1}{2},
   \chi_{3,\varpi} \right) & \Lambda(\varpi) \Phi\left( \frac{N(\varpi)}{y} \right) \\
=& C_{\mq(\omega)} y +O \left( yx^{-1/6}+y^{1/2+\varepsilon}x^{1/2+\varepsilon}+y^{27/32+\varepsilon} \left( \frac yx \right)^{1/2+1/8}+y^{1-1/20+\varepsilon} \left( \frac yx \right)^{1/2} \right).
\end{align*}

   The assertion of Theorem \ref{secmom} now follows from this by setting $x=y^{19/20}$.

\section{Proof of Theorem \ref{thirdmom}}
\label{sec thirdthm}

  As the proof of Theorem \ref{thirdmom} is similar to that of Theorem \ref{secmom}, we shall only give a sketch of the proof for the case $j=3$.  We let
\begin{equation*}
\mathcal{M} = \sum_{\substack{ p=N(\varpi) \\ \varpi \equiv 1 \bmod {9}} }\;
\sum_{\substack{\chi \bmod{p} \\ \chi^3 = \chi_0, \chi \neq \chi_0}} L(1/2, \chi) \Lambda(p) \Phi \leg{p}{Q}.
\end{equation*}

   Applying the approximate functional equation \eqref{approxfunc} with $A_p B = p$, we obtain $\mathcal{M} = \mathcal{M}_1 + \mathcal{M}_2$, where
\begin{align*}
 \mathcal{M}_1 &= \sum_{\substack{ p=N(\varpi) \\ \varpi \equiv 1 \bmod {9}} }\;  \sum_{\substack{\chi \bmod{p} \\ \chi^3 = \chi_0, \chi \neq \chi_0}}
\sum_{m=1}^{\infty} \frac{\chi(m)\Lambda(p)}{\sqrt{m}} W\leg{m}{x} \Phi\left(\frac{p}{Q}\right), \\
 \mathcal{M}_2 &= \sum_{\substack{ p=N(\varpi) \\ \varpi \equiv 1 \bmod {9}} }\;  \sum_{\substack{\chi \bmod{p} \\ \chi^3 = \chi_0, \chi \neq \chi_0}}
\frac {\tau(\chi)}{q^{1/2}}\sum_{m=1}^{\infty} \frac{\overline{\chi}(m)\Lambda(p)}{\sqrt{m}} W\leg{x m}{p} \Phi\left(\frac{p}{Q}\right),
\end{align*}
   for some $x>1$ to be specified later. \newline

   It follows from Lemma \ref{lemma:quarticclass} and \eqref{taucubic} that we have
\begin{align*}
 \mathcal{M}_1 &= \sum_{\substack {\varpi \equiv 1 \bmod 9}} \ \sum_{m=1}^{\infty} \frac{\chi_{3,\varpi}(m)\Lambda(\varpi)}{\sqrt{m}} W \leg{m}{x}
\Phi\left(\frac{N(\varpi)}{Q}\right), \\
 \mathcal{M}_2 &= \sum_{\substack {\varpi \equiv 1 \bmod 9}} \frac {g_3(\varpi)}{N(\varpi)^{1/2}} \sum_{m=1}^{\infty} \frac{\overline{\chi}_{3, \varpi}(m)\Lambda(\varpi)}{\sqrt{m}}
W\leg{x m}{N(\varpi)} \Phi \left(\frac{N(\varpi)}{Q}\right).
\end{align*}

   We argue as in Section \ref{section Thm1} to see that
\begin{align}
\label{calM1}
 \mathcal{M}_1 &= \sum_{\substack {c \equiv 1 \bmod 9}} \ \sum_{m=1}^{\infty} \frac{\chi_{3,c}(m)\Lambda(c)}{\sqrt{m}} W \leg{m}{x}
\Phi\left(\frac{N(c)}{Q}\right)+O\left( Q^{1/2+\varepsilon}x^{1/2+\varepsilon} \right) =: \mathcal{M}'_1+O \left( Q^{1/2+\varepsilon}x^{1/2+\varepsilon} \right).
\end{align}

   Let the Hecke character $\chi^{(a)}_3 \pmod {9a}$ be defined as in Section \ref{section Thm1} for $(a,3)=1, a \in \mz[\omega]$.
Similar to our discussions in Section \ref{section Thm1}, on writing every positive $m \in \mz$ as $m=3^{r}m'$, with $r \in \intz, r \geq 0,
m' \in \intz, (m', 3)=1$, we see that $\chi_{3,c}(m) = \chi^{(m')}_3(c)$. It follows  that
\begin{align*}
 \mathcal{M}'_{1} =&  \sum_{\substack{r \geq 0 \\ (m,3)=1, m > 0}} \frac{1}{3^{r/2}\sqrt{m}} W \leg{3^r m}{x} \mathcal{M}_1(r,m),  \\
  \mathcal{M}_2 =& \sum_{\substack{r \geq 0 \\ (m,3)=1, m > 0}} \frac{1}{3^{r/2}\sqrt{m}} \sum_{\substack {\varpi \equiv 1 \bmod 9}}
\frac {g_3(\varpi)\overline{\chi}^{(m)}_{3}(\varpi) \Lambda(\varpi)}{N(\varpi)^{1/2}}
W\leg{3^r m x}{N(\varpi)} \Phi \left(\frac{N(\varpi)}{Q}\right),
\end{align*}
where
\[ \mathcal{M}_1(r,m)= \sum_{c \equiv 1 \bmod {9}}\chi^{(m)}_3(c)\Lambda(c)  \Phi \left( \frac{N(c)}{Q} \right). \]

   We apply Mellin inversion and shift the contour of integration to find that
\begin{align}
\label{M1'}
    \mathcal{M}'_{1} =D_3 Q +O \left( Qx^{-1/6}+Q^{1/2+\varepsilon}x^{1/2+\varepsilon} \right),
\end{align}
   where $D_3$ is given as in \eqref{D}. \newline

  We treat $\mathcal{M}_{2}$ by using the ray class characters to detect the
condition that $c \equiv 1 \bmod {9}$ and then applying Lemma \ref{lemg3} and partial summation to see that
\begin{align*}
   &   \sum_{\substack {\varpi \equiv 1 \bmod 9}}
\frac {g_3(\varpi)\overline{\chi}^{(m)}_{3}(\varpi) \Lambda(\varpi)}{N(\varpi)^{1/2}}
W\leg{3^r m x}{N(\varpi)} \Phi \left(\frac{N(\varpi)}{Q}\right)
   \ll  N(m)^{1/8}Q^{27/32+\varepsilon}+Q^{1-1/20+\varepsilon}.
\end{align*}

   We then deduce from this via partial summation that
\begin{align}
\label{M2}
\begin{split}
    \mathcal{M}_{2}  \ll   Q^{27/32+\varepsilon} \left( \frac Qx \right)^{1/2+1/4}+Q^{1-1/20+\varepsilon} \left( \frac Qx \right)^{1/2}.
\end{split}
\end{align}

   By combining the estimations given in \eqref{calM1}, \eqref{M1'} and \eqref{M2}, we obtain that
\begin{align*}
  \sum_{\substack{ p=N(\varpi) \\ \varpi \equiv 1 \bmod {9}} }\;
\sum_{\substack{\chi \bmod{p} \\ \chi^3 = \chi_0, \chi \neq \chi_0}} & L(1/2, \chi) \Lambda(p) \Phi \leg{p}{Q} \\
=& D_3 Q +O \left( Qx^{-1/6}+Q^{1/2+\varepsilon}x^{1/2+\varepsilon}+Q^{27/32+\varepsilon} \left( \frac Qx \right)^{1/2+1/4}+Q^{1-1/20+\varepsilon} \left( \frac Qx \right)^{1/2} \right).
\end{align*}
   The assertion of Theorem \ref{thirdmom} for $i=3$ now follows from this by setting $x=Q^{19/20}$.

\vspace*{.5cm}

\noindent{\bf Acknowledgments.} P. G. is supported in part by NSFC grant 11871082 and L. Z. by the FRG grant PS43707 at UNSW.

\bibliography{biblio}
\bibliographystyle{amsxport}

\vspace*{.5cm}

\noindent\begin{tabular}{p{8cm}p{8cm}}
School of Mathematical Sciences & School of Mathematics and Statistics \\
Beihang University & University of New South Wales \\
Beijing 100191 China & Sydney NSW 2052 Australia \\
Email: {\tt penggao@buaa.edu.cn} & Email: {\tt l.zhao@unsw.edu.au} \\
\end{tabular}

\end{document}